\def\M{\mathcal{M}}
\def\C{\mathcal{C}}
\def\G{\mathcal{G}}
\def\cO{\mathcal{O}}
\def\V{\mathcal{V}}
\def\K{\mathcal{K}}
\def\Y{\mathcal{Y}}
\def\Z{\mathcal{Z}}
\def\P{\mathcal{P}}
\def\X{\mathcal{X}}
\def\PP{\mathbb{P}}
\def\codim{\mathrm{codim}}
\newtheorem{thm}{Theorem}[section]
\newtheorem{prop}[thm]{Proposition}
\newtheorem{lemma}[thm]{Lemma}
\theoremstyle{definition}
\numberwithin{equation}{section}
\newcommand{\be}{\begin{equation}}
\newcommand{\ee}{\end{equation}}
\begin{document}

\title {Two remarks on the Weierstrass flag}

\author{Enrico Arbarello}
\address{Universit\`a di Roma ``La Sapienza'' - Dipartimento di Matematica
- Piazzale Aldo Moro 5, 00185 Roma - Italy}
\email{ea@mat.uniroma1.it}

\author{Gabriele Mondello}
\address{Universit\`a di Roma ``La Sapienza'' - Dipartimento di Matematica
- Piazzale Aldo Moro 5, 00185 Roma - Italy}
\email{mondello@mat.uniroma1.it}

\subjclass[2010]{Primary 14H10, 14H55}

\date{August 16, 2011}

\begin{abstract}
We show that the locally closed strata of the Weierstrass flags
on $M_g$ and $M_{g,1}$ are almost never affine.
\end{abstract}

\maketitle

\begin{section}{Introduction}

\hskip 0.4cm
The coarse moduli space $M_{g,n}$ of curves of genus $g$ with $n$
marked points is a quasi-projective variety.
Grothendieck \cite{grothendieck:esquisse} already wondered how many
affines are needed to cover $M_{g,n}$. A first hint came from
Diaz's upper bound \cite{diaz:complete} on the dimension of a complete
subvariety of $M_{g,n}$, then strengthened by Harer's computation
\cite{harer:virtual} of the virtual cohomological dimension of the
mapping class group. Looijenga's vanishing \cite{looijenga:tautological}
of the tautological classes
in high degree motivated the following.

\vspace{0.3cm}
{\it{\bf{Question} (Looijenga):}
Does $M_g$ have an affine stratification with $g-1$ layers?
Does $M_{g,1}$ have an affine stratification with $g$ layers?}
\vspace{0.3cm}

In both cases, stratifications with the right number of layers
do exist and it is natural to ask whether the layers are affine.\\

In this short paper, we will concentrate
on the Weierstrass flag studied by Arbarello \cite{arbarello:weierstrass},
whose strata had already been defined by Rauch
\cite{rauch:weierstrass}.
As Mumford pointed out in \cite{mumford:towards},
Section 7, the proof of Theorem (3.27) in \cite{arbarello:weierstrass}
is incomplete. Therefore, it is still not known whether one may exclude that any single stratum 
of this flag contains compact curves.
Actually, one may even ask whether  these strata are affine.\\

In this note, we will show that almost no such stratum is affine.\\

Clearly, these negative results about
a specific stratification
do not conflict with Looijenga's question.
In fact, for $g\leq 5$,
Fontanari-Looijenga \cite{fontanari-looijenga:perfect} show that
a good stratification exists and Fontanari-Pascolutti
\cite{fontanari-pascolutti:cover} exhibit a good affine cover of $M_g$.

\begin{subsection}{Content of the paper}
Proposition \ref{prop.1} deals with the Weierstrass stratification
in $M_{g,1}$. The idea is to show that the strata can be realized as
open subsets of smooth varieties, whose complement
(representing a certain family of plane curves)
is not purely divisorial.
The techniques are borrowed from Arbarello-Cornalba
\cite{arbarello-cornalba:petri} (see also
Chapter XXI of \cite{ACG:II}). Similar computations can be also
found in Caporaso-Harris \cite{caporaso-harris:plane}.

Proposition \ref{prop.2} deals with the stratification of $M_g$
and relies on the same idea.
The key computation is borrowed from Diaz \cite{diaz:two}.\\

We work over the field of complex numbers, but all the results
hold over an algebraically closed field of characteristic zero.\\
\end{subsection}
\begin{subsection}{Acknowledgments}
We thank Eduard Looijenga for a continuous exchange of ideas 
on this topic and the referee for useful suggestions.
\end{subsection}
\end{section}
\begin{section}{Linear series of Weierstrass type}

\hskip 0.4cm

Below we describe a slight variation of  the theory of deformation of $g^1_d$'s
and $g^2_d$'s on smooth curves. That theory is described in
Chapter XXI of \cite{ACG:II}. 
Set
$$
\G^r_{d,*}=\{[(C,p,Z)] \,\,|\,\, 1\in Z\subset H^0(C,dp),\,\, \dim(Z)=r+1\}
$$
which naturally sits inside the coarse
space associated to $\G^r_d\times_{\M_g} \M_{g,1}$.

Following the same  arguments to prove the smoothness and to compute the dimension of 
$\G^1_d$ as
in \cite{ACG:II}, Chapter XXI, Proposition 6.8, one proves that
 the variety $\G^1_{d,*}$ is smooth   and of dimension  equal to $2g+d-3$.

\begin{subsection}{Plane curves}
Let us  next recall the basic setting for the study of $\G^2_d$ as described in Sections 8, 9 and 10, Chapter XXI of \cite{ACG:II}.

 Let $C$ be a smooth genus $g$  curve and  $\varphi: C\to\PP^2$   a nonconstant
 morphism. The normal sheaf $N_\varphi$ to this morphism
 is defined by the exact sequence
 $$
 0\to T_C\overset{d\varphi}\to\varphi^*T_{\PP^2}\to N_\varphi\to0\, .
 $$
 Let
\be
 \xymatrix{
\C \ar[d]_{\pi} \ar[r]^{\widetilde\varphi} & \PP^2 
\\
(U,u_0) 
}\label{diagram1}
\ee
 be a deformation of $\varphi$ parametrized by a pointed  analytic space $(U, u_0)$,  so that
$\iota:C\overset{\cong}{\hookrightarrow}\pi^{-1}(u_0)$
 and $\widetilde\varphi\circ\iota=\varphi$.
The characteristic homomorphism of this family is the homomorphism
\be
T_{u_0}(U)\to H^0(C,N_\varphi)\label{horikawa1}
\ee
where $T_{u_0}(U)$ is the Zariski tangent space to $U$ at $u_0$,
assigning to each tangent vector to $U$ at $u_0$ the Horikawa class of the corresponding infinitesimal deformation of $\varphi$. Denote by $N'_\varphi$ the line bundle  quotient of $N_\varphi$, i.e $N'_\varphi=N_\varphi/\text{\it{Torsion}}$.
The line bundle $N'_\varphi$
 may also be defined by the exact sequence
$$
 0\to T_C(R)\overset{d\varphi}\to\varphi^*T_{\PP^2}\to N'_\varphi\to0
 $$
where  $R$ be the ramification divisor of $\varphi$. In Proposition 9.10 (loc.cit)
it is proved that, if the restriction of $\widetilde\varphi$ to each fiber of $\pi$ is {\it birational}, then
 for a general $u\in U$ the image of the characteristic homomorphism
$T_{u}(U)\to H^0(C_u,N_{\varphi_u})$ does not intersect the kernel
of $H^0(C_u,N_{\varphi_u})\to H^0(C_u,N'_{\varphi_u})$,
 where $C_u=\pi^{-1}(u)$
and $\varphi_u=\widetilde\varphi|_{C_u}$ (here by general point of $U$  we mean a general point of one of its irreducible components).

\vskip 0.3cm

A local universal deformation for a morphism $\varphi: C\to \PP^2$
can be constructed as follows, at least when $g\geq 2$ (the cases $g=0,1$ are best treated separately).
Let  $d=\deg \varphi^*(\cO_{\PP^2}(1))$, let $\K\to B$ be a Kuranishi family for $C$. 
Consider  the Brill-Noether variety $\G^2_d$ over $B$ and let 
$\V$ be the bundle of projective frames for the universal $g^2_d$ over $\G^2_d$.
Pulling back $\K$ to $\V$, yields a deformation of $\varphi$

\be
 \xymatrix{
\K \ar[d] & \ar[l] \K\times_B\V=\C \ar[d]_{\Pi} \ar[rr]^{\qquad\Phi} && \PP^2\\ 
B & \ar[l] (\V,v_0) 
}\label{diagram2}
\ee
 parametrized by $\V$, with $C\cong\Pi^{-1}(v_0)$ and $\varphi\cong \varphi_{v_0}=\Phi|_{C_{v_0}}$.
Moreover, for a general point
$v\in \V$, the characteristic homomorphism yields an isomorphism
$$
T_v(\V)\cong H^0(C_v,N_{\varphi_v})\,.
$$

In Theorem 10.1,  Chapter XXI  (loc.cit) it is proved that if $\X$ is an irreducible component of $\G^2_d$ whose general point corresponds to a curve $C$ of genus $g$ equipped with a basepoint-free $g^2_d$, which {\it is not} composed with an involution,  then $\dim \X=3d+g-9$,
or equivalently $\dim \V'=3d+g-1$
where $\V':=\V\times_{\G^2_d}\X$ is the pull-back to $\X$ of the bundle $\V$.
There the theorem is proved under the assumption that $g\geq 2$, but the cases $g=0,1$ can be treated in a similar way. 

In Theorem 10.14,  Chapter XXI (loc.cit)  it is proved that, for every (non-negative) value of $d$ and $g$ such that
$(d-1)(d-2)/2\geq g$, there exists a genus $g$ curve $C$  equipped with a basepoint-free $g^2_d$ which realizes $C$ as a plane nodal curve of degree $d$. 
\end{subsection}
\begin{subsection}{Plane curves with a total tangency point}
%
In order to study $\G^2_{d,*}$ we need to consider the appropriate
deformation problem. Let us fix a point $Q$ and a line $L$ in $\PP^2$. Look at the deformation (\ref{diagram1}) and suppose that 
\vskip 0.2 cm
\begin{itemize}
\item[(a)]
$\pi: \C \to U$ is a family of pointed curves i.e.  
$\pi$ has a section $\sigma$. 
\item[(b)]
For each $u\in U$, $\varphi_u(\sigma(u))=Q$. 
\item[(c)]
For each $u\in U$, $\varphi_u$ is birational.
{\item[(d)]
The plane curve $\Gamma_u=\varphi_u(C_u)$  is unibranched at $ Q$ with tangent line $L$ intersecting $\Gamma_u$ in  $Q$ with multiplicity $d=\deg\Gamma_u$.}
\end{itemize}

Imitating the arguments in (loc.cit) one sees that
the characteristic homomorphism (\ref{horikawa1}) factors through the inclusion $H^0(C ,N_{\varphi}(-dp))\subset H^0(C,N_{\varphi})$,
 where $p=\sigma(u_0)$ and that, moreover,  for a general $u\in U$ the image of the characteristic homomorphism
does not intersect the kernel
of $H^0(C,N_{\varphi}(-dp))\to H^0(C,N'_{\varphi}(-dp))$.

\vskip 0.3cm

Assume now $g\geq 2$; the cases $g=0,1$ can be easily dealt with separately.

Consider the natural morphism $\tau: \G^2_{d,*}\to\G^2_{d}$,
which is finite-to-one,
and its restriction to the irreducible component $\X$. 
An element of $\V'$
corresponds to a triple $(C, p, \varphi)$, where $\varphi: C\to \PP^2$ is the morphism associated to a frame of a subspace $Z\subset H^0(C, dp)$, with $[(C,p,Z)]\in \X$. Fix  a point $Q$ and a line $L$ in $\PP^2$. Let $\V^*$ be the subbundle of $\V'$ given by those frames having the property  that the corresponding morphism $\varphi: C\to \PP^2$ is such that $\varphi(p)=Q$
and $\varphi^*(L)=dp$. Hence, if $\C^*$ is the restriction of the family
$\C\rightarrow\V$ over $\V^*$, the family
\be
 \xymatrix{
\ \C^* \ar[d]_{\Pi^*} \ar[r] & \PP^2\\ 
(\V^*,v) 
}
\ee
satisfies
conditions (a), (b) and (d) above and it
is a local universal deformation.

\vskip 0.3cm

Theorem 10.1,  Chapter XXI  (loc.cit), in the present situation, translates into
the following.
 
 \begin{lemma}\label{dimension} Let  $\X$ be an irreducible component of $\G^2_{d,*}$ whose general point corresponds to
  a triple $(C,p, Z)$, where $(C, p)$  is a genus $g$ pointed curve and $Z$ is a three-dimensional 
  subspace of $H^0(C, dp)$, with  $1\in Z$ and whose corresponding 
$g^2_d\subset |dp|$ is  basepoint-free  and not composed with an involution. Then $\dim \X=2d+g-6$.
 \end{lemma}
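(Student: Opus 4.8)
The plan is to compute the dimension of the relevant deformation space $\V^*$ by a tangent-space calculation, exactly paralleling the proof of Theorem 10.1 in Chapter XXI of \cite{ACG:II}, but now incorporating the total-tangency condition at $p$. The key is the identification, valid at a general point $v\in\V^*$, of the tangent space $T_v(\V^*)$ with a suitable space of sections of the normal sheaf. By the discussion immediately preceding the lemma, the characteristic homomorphism for the deformation problem (a)--(d) factors through $H^0(C,N_\varphi(-dp))\subset H^0(C,N_\varphi)$, and for a general point its image avoids the kernel of $H^0(C,N_\varphi(-dp))\to H^0(C,N'_\varphi(-dp))$. First I would argue that this identification upgrades to an isomorphism
$$
T_v(\V^*)\cong H^0(C,N'_\varphi(-dp)),
$$
so that $\dim\V^*=h^0(C,N'_\varphi(-dp))$, and then pass to $\X$ via $\dim\X=\dim\V^*-\dim(\text{fiber of }\V^*\to\X)$.

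**Next I would carry out the numerical computation** of $h^0(C,N'_\varphi(-dp))$. Recall that $N'_\varphi$ sits in the exact sequence $0\to T_C(R)\to\varphi^*T_{\PP^2}\to N'_\varphi\to 0$, where $R$ is the ramification divisor; for a general plane nodal curve the hypotheses (basepoint-free, not composed with an involution) guarantee that $N'_\varphi$ is a line bundle of the expected degree and that $H^1$ vanishes, so $h^0$ is computed by Riemann--Roch. One computes $\deg N'_\varphi=3d+2g-2-\deg R$; from the first normal-sheaf sequence $N'_\varphi\cong N_\varphi(-R)$ together with $\deg N_\varphi=3d+g-1-(2g-2)$ corrected by $R$, one obtains that $h^0(C,N'_\varphi)$ recovers the value $3d+g-1$ of Theorem 10.1. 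Twisting by $-dp$ drops the degree by exactly $d$, and since $N'_\varphi(-dp)$ remains nonspecial for a general such configuration, Riemann--Roch gives
$$
h^0(C,N'_\varphi(-dp))=h^0(C,N'_\varphi)-d=(3d+g-1)-d=2d+g-1.
$$
Thus $\dim\V^*=2d+g-1$.

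**Finally I would account for the fiber dimension** of the map $\V^*\to\X$. The bundle $\V'\to\X$ of projective frames of a $g^2_d$ has fibre $\mathrm{PGL}_3$, of dimension $8$; imposing $\varphi(p)=Q$ (two conditions) and $\varphi^*(L)=dp$, i.e.\ that the line $L$ is the unique tangent line of total tangency at $p$ (two more conditions fixing the line through $Q$), cuts this down. Concretely, the projectivities fixing both the point $Q$ and the line $L$ through $Q$ form a subgroup of $\mathrm{PGL}_3$ of dimension $8-4=4$, but one must check that the stabiliser relevant to a general frame of $Z$ with $1\in Z$ has the expected dimension $5$, since the frame carries the extra normalization recorded by the condition $1\in Z$. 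Granting that the generic fibre of $\V^*\to\X$ has dimension $5$, we conclude
$$
\dim\X=\dim\V^*-5=(2d+g-1)-5=2d+g-6,
$$
as claimed. \emph{The main obstacle} I anticipate is precisely the bookkeeping in this last step: verifying that, at a general point, $\V^*$ is smooth of the expected dimension (equivalently that the total-tangency and incidence conditions are transverse and impose independent conditions on the frame), and that the relevant stabiliser has the asserted dimension. This requires the genericity input from Proposition 9.10 (loc.\ cit.) to ensure the characteristic homomorphism behaves as expected, and a careful identification of the normalization encoded by $1\in Z$ with the geometric total-tangency condition $\varphi^*(L)=dp$.
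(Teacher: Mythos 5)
Your overall skeleton matches the paper's: compute $\dim\V^*$ by deformation theory via $N'_\varphi(-dp)$, then subtract the fibre dimension $5$ of $\V^*\to\X$ (your final count of $5$ is right, though your intermediate ``$8-4=4$'' is not: the stabiliser of a flag $Q\in L$ in $\mathrm{PGL}_3$ has dimension $8-3=5$, since requiring $L$ to pass through the already-fixed $Q$ is only one further condition). But there is a genuine gap at the heart of your argument, and it is exactly the point you flag as ``the main obstacle.'' You posit an a priori isomorphism $T_v(\V^*)\cong H^0(C,N'_\varphi(-dp))$ and propose to justify it by checking transversality of the tangency conditions. The Horikawa-type input from the discussion preceding the lemma gives only an \emph{injection} of $T_v(\V^*)$ into $H^0(C,N_\varphi(-dp))$ whose composite to $H^0(C,N'_\varphi(-dp))$ is injective at a general point, i.e.\ only the upper bound $\dim\V^*\leq h^0(N'_\varphi(-dp))$. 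The paper never proves surjectivity or transversality directly; instead it supplies the missing half by an elementary determinantal observation: imposing $d$-fold tangency with $L$ at $Q$ is $d$ linear conditions on $\V'$, so every component of $\V^*$ satisfies $\dim\V^*\geq\dim\V'-d=2d+g-1$. This lower bound is what forces $N'_\varphi(-dp)$ to be non-special (via Clifford), and squeezing it against the Riemann--Roch upper bound yields equality. Without it your argument does not close: you would have to prove transversality by hand, which is precisely what the two-sided bound lets you avoid.

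A second, related inaccuracy: you assert that $h^0(C,N'_\varphi)$ ``recovers the value $3d+g-1$'' and hence that twisting by $-dp$ gives $h^0(N'_\varphi(-dp))=2d+g-1$. In fact $N'_\varphi=\omega_C(-R)\otimes\varphi^*\cO_{\PP^2}(3)$ has degree $3d+2g-2-\deg R$, so Riemann--Roch gives $h^0(N'_\varphi(-dp))=2d+g-1-\deg R$; your identity holds only if the general $\varphi$ in the component is an immersion ($R=0$), which you have not established and which a priori could fail for some component of $\G^2_{d,*}$. The paper sidesteps this too: it keeps the term $-\deg R\leq 0$, obtaining only the inequality $\dim\V^*\leq 2d+g-1-\deg R\leq 2d+g-1$, and again the lower bound $\dim\V^*\geq 2d+g-1$ closes the argument (and shows a posteriori that $\deg R=0$ at a general point). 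So the missing idea is concrete and single: the linear-conditions lower bound $\dim\V^*\geq\dim\V'-d$. Once you add it, your outline becomes the paper's proof.
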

 
\begin{proof}
Assume that $g\geq 2$. The cases $g=0,1$ can be easily treated separately.
 For our purposes we may restrict our attention to a small neighbourhood 
of $[(C,p,Z)]$ in $\X$.
By assumption, 
$h^0(dp)=l>2$ and the linear series $|dp|$ is fixed point free and not composed with an involution;
so, the family $\Pi^*$ satisfies also condition (c) above.

Clearly,
\be
\dim\V^*=\dim \X+5\, .
\label{dimvstar1}
\ee
Since, for a plane curve, imposing a $d$-fold tangency with a given line $L$ at given point $Q$
amounts to $d$ linear conditions, we also have
\be
\dim\V^*\geq\dim \V'-d=2d+g-1\,.
\label{dimvstar2}
\ee

 If $v$   is a general point of $\V^*$ corresponding   to a point $[(C,p,Z)]\in \X$ and a morphism $\varphi: C\to \PP^2$,  we get a commutative diagram
 \be
 \xymatrix{
 T_v(\V^*)\ar@{^{(}->}[r]\ar[d]_\alpha & T_v(\V')\ar[d]^{\cong}\\
H^0(C, N_\varphi(-dp)) \ar@{^{(}->}[r]& H^0(C, N_\varphi)
}\, .
\ee
Since the image of $\alpha$ does not intersect the kernel of 
$H^0(C, N_\varphi(-dp))\to H^0(C, N'_\varphi(-dp))$, we get
$$
\dim \V^*\leq h^0(N'_\varphi(-dp))\, .
$$
From (\ref{dimvstar2}) it   follows that the line bundle $ N'_\varphi(-dp)$ is non-special so that
 $H^1(C, N'_\varphi(-dp)) =H^1(C, N_\varphi(-dp)) =0$. On the other hand, 
 via the Euler sequence, we get
$$
 N'_\varphi=\omega_C(-R)\otimes \phi^*\cO_{\PP^2}(3)
$$
By Riemann-Roch, we get
\begin{align*}
\dim \V^* & \leq (\mathrm{deg}(N'_\varphi)-d)-g+1=
2g-2-\mathrm{deg}(R)+3d-d-g+1\\
& = g+2d-1-\mathrm{deg}(R)\leq g+2d-1\, .
\end{align*}
The lemma follows now from the above inequality,
together with (\ref{dimvstar1}) and (\ref{dimvstar2}).
\end{proof}

  \begin{lemma}\label{existence} For every $g$ and $d$ such that $(d-1)(d-2)\geq 2g$ there exists a genus $g$ pointed curve $(C,p)$ equipped with a basepoint-free $g^2_d\subset |dp|$ which realizes $C$ as a plane nodal curve $\Gamma$ of degree $d$, with a smooth point (the image of  $p$)
whose tangent line
has intersection multiplicity $d$ with $\Gamma$.
 \end{lemma}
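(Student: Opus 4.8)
The plan is to reduce the statement to a Severi-type existence problem inside a suitable linear subsystem and to solve it by exhibiting an explicit smooth model in the top-genus case and smoothing nodes in general. First I would normalize coordinates so that $Q=[0:0:1]$ and $L=\{y=0\}$. In the affine chart $z=1$, a degree $d$ curve $F(x,y)=0$ has contact exactly $d$ with $L$ at $Q$ if and only if $F(x,0)=cx^d$ with $c\neq0$, and $Q$ is then a smooth point with tangent $L$ precisely when $F=cx^d+y\,G(x,y)$ with $G(0,0)\neq0$. The condition $F(x,0)=cx^d$ amounts to $d$ independent linear conditions (killing the coefficients of $1,x,\dots,x^{d-1}$), so these curves form a linear subsystem $\Sigma\subset|\cO_{\PP^2}(d)|$ of dimension $\binom{d+1}{2}$. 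Setting $\delta=\binom{d-1}{2}-g\geq0$, the lemma becomes: \emph{$\Sigma$ contains an irreducible curve whose only singularities are $\delta$ nodes}; such a curve automatically has $Q$ as a smooth unibranch point with the prescribed $d$-fold tangency, and its geometric genus is $g$. Note that $\dim\Sigma-\delta=2d+g-1$, in agreement with the dimension of $\V^*$ computed above.

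For the top-genus case $g=\binom{d-1}{2}$, i.e. $\delta=0$, I would write down the explicit smooth model $\Gamma=\{x^d=y\prod_{i=1}^{d-1}(y-\lambda_i)\}$ with the $\lambda_i$ distinct and nonzero. Here $F(x,0)=x^d$, so $\Gamma\cap L=dQ$ (the remaining point $[1:0:0]$ of $L$ does not lie on $\Gamma$), which gives the total tangency; and $Q$ is smooth with tangent $L$ since $G(0,0)=\prod(-\lambda_i)\neq0$. The Jacobian criterion shows $\Gamma$ is everywhere smooth: an affine singular point would force a common zero of $P(y)=y\prod(y-\lambda_i)$ and $P'(y)$, impossible as $P$ has simple roots, while the $d$ points of $\Gamma$ at infinity are readily checked to be smooth. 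Thus $\Gamma$ is a smooth plane curve of degree $d$ and genus $\binom{d-1}{2}$ realizing the required configuration. It is worth stressing that merely colliding the $\lambda_i$ yields $A_k$-singularities rather than nodes once $d\geq3$, which already signals where the difficulty of the general case lies.

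To reach every smaller genus I would produce the nodes by the standard smoothing/degeneration technique, carried out inside the constrained system $\Sigma$ exactly as in the proof of Theorem 10.14 (loc.\ cit.)\ and in Caporaso--Harris. Concretely, one specializes $G$ so that $yG$ acquires a configuration of at least $\delta$ transverse nodes located away from $L$, obtaining a (possibly reducible) member of $\Sigma$, and then smooths all but $\delta$ of these nodes while remaining in $\Sigma$. The independence of the smoothings, and the fact that the associated generalized Severi variety $V(\Sigma,\delta)$ is nonempty, smooth of the expected dimension $2d+g-1$, with irreducible general member having exactly $\delta$ nodes, follows once one knows the vanishing of the obstruction group $H^1$ of the ideal sheaf of the chosen nodes twisted by $\cO_{\PP^2}(d)$ and relative to the base scheme $dQ$ on $L$; since $\cO_{\PP^2}(d)$ is sufficiently positive and the imposed scheme $dQ$ is mild, this holds for all $\delta$ up to $\binom{d-1}{2}$.

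The main obstacle is precisely this last step: one must guarantee \emph{nodes} (and not the higher $A_k$ singularities that appear most naturally, as the base case illustrates) while \emph{simultaneously} preserving the $d$-fold contact at $Q$ and keeping the general smoothed curve irreducible, for every $\delta$ up to the maximum $\binom{d-1}{2}$. This is the content that must be imported from the Severi-type smoothing theory for the sublinear system $\Sigma$; the dimension bookkeeping ($\dim\Sigma-\delta=2d+g-1=\dim\V^*$) only guarantees that there is room for such curves, not that the relevant smoothing is unobstructed, and it is the latter verification that carries the real weight of the proof.
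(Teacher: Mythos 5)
Your reduction to the constrained linear system $\Sigma$ and your explicit smooth member $x^d=y\prod_{i=1}^{d-1}(y-\lambda_i)$ are correct, but they only settle the extremal case $\delta=0$, i.e.\ $2g=(d-1)(d-2)$. For every other genus the proposal defers the actual content of the lemma --- the existence of an \emph{irreducible} member of $\Sigma$ with exactly $\delta$ nodes and no other singularity, still meeting $L$ only at $Q$ --- to an unproven ``Severi-type smoothing theory for $\Sigma$.'' The one concrete degeneration you sketch (``specialize $G$ so that $yG$ acquires at least $\delta$ transverse nodes away from $L$'') is not constructed, and the natural degenerations inside $\Sigma$ are exactly the dangerous ones: letting $c\to 0$ in $cx^d+yG$ produces a curve containing $L$, whose extra singularities sit \emph{on} $L$ and destroy the $d$-fold contact at $Q$. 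So the step that you yourself identify as carrying ``the real weight of the proof'' is missing, and the direction you choose (start smooth, degenerate to acquire nodes) is the hard, obstructed direction.

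The paper runs the induction the other way. It anchors at genus $0$: the curve $y=x^d$ shows that the system $\Sigma^*_{d,0}$ of irreducible \emph{rational} degree-$d$ curves with $(\Gamma\cdot L)_Q=d$ is non-empty, and the $N_\varphi(-dp)$-twisted version of Theorem 10.7 of \cite{ACG:II}, Chapter XXI, shows that a general member of a component of $\Sigma^*_{d,0}$ is a $\tbinom{d-1}{2}$-nodal curve with the required tangency. One then smooths any prescribed subset of nodes of this maximally degenerate curve (the analogue of Lemma 10.15, again with $N_\varphi$ replaced by $N_\varphi(-dp)$) to reach every genus $0\le g\le\tbinom{d-1}{2}$. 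The unobstructedness needed for this upward smoothing is precisely the non-speciality $H^1(C,N'_\varphi(-dp))=0$ already established in the proof of the dimension lemma, so nothing extra has to be ``imported.'' If you want to salvage your plan, replace your degenerate member by the rational curve $y=x^d$ (or rather a general deformation of it inside $\Sigma^*_{d,0}$) and smooth nodes upward; as written, the proposal has a genuine gap for all $g<\tbinom{d-1}{2}$.
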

\begin{proof}
 Here again to we use a slight variation of the arguments used in Section 10
 of Chapter XXI (loc.cit). 
Fix  a point $Q$ and a line $L$
 in $\PP^2$ and denote by $\Sigma_{d,g}^*$ the continuous system of all irreducible plane curves $\Gamma$ of degree $d$ whose normalization has genus $g$,
 and such that
 $(\Gamma\cdot L)_Q=d$. Let  $\varphi: C\to\Gamma$  be  the normalization.
We consider an  irreducible component $\Sigma^*$ of $\Sigma_{d,g}^*$
having the property that its general member represents a plane curve which is smooth at $Q$.
 Then one proves that (if non-empty)
 $\Sigma^*$ has dimension equal to $2d+g-1$ and that a general point of $\Sigma^*$ corresponds to a plane irreducible  curve of degree $d$ having
 $\delta=(d-1)(d-2)/2-g$ nodes, and no other singularity. The proof of this fact is, word by word,  the proof of Theorem 10.7 (loc.cit),
where one should  substitute the normal sheaf $N_\varphi$ with  $N_\varphi(-dp)$. To prove that a non empty component $\Sigma^*$ exists, one may proceed as follows. Looking at the curve $y=x^d$ one sees that $\Sigma_{d,0}^*$ is non empty. Let $\Gamma_0$
 be a rational nodal curve corresponding to a general point of (a component) of
 $\Sigma_{d,0}^*$. Mimicking the arguments used to prove Lemma 10.15 (loc.cit),
 but again using  $N_\varphi(-dp)$ instead  of $N_\varphi$, one shows that,
given any integer $k$ with $0\leq k\leq (d-1)(d-2)/2$,
there exists a deformation of $\Gamma_0$ whose general member
is a plane irreducible curve $\Gamma$ having
 $\delta=(d-1)(d-2)/2-k$ nodes and no other singularity,
and having $Q$ as a simple point with $(\Gamma\cdot L)_Q=d$.
\end{proof}
 
\end{subsection}
 
\end{section}
\begin{section}{On the Weierstrass flags}
\hskip 0.4cm 
Let $g$ and $d$ be integer such that $2\leq d\leq g+1$.
Consider the closed subvariety
\[
\overline{W}_*(d)=\{[(C,p)]\in M_{g,1}\,\,|\,\, h^0(C,dp)\geq 2\}
\]
of $M_{g,1}$ of dimension $2g-3+d$ and notice that
\[
\overline{W}_*(2)\subset\overline{W}_*(3)\subset
\dots\subset\overline{W}_*(g)\subset\overline{W}_*(g+1)
=M_{g,1}
\]
is a stratification of $M_{g,1}$, whose locally closed strata
are given by
\[
W_*(d)=\overline{W}_*(d)\setminus\overline{W}_*(d-1)
=\{[(C,p)]\in M_g\,\,|\,\,h^0(C,dp)=2\}\,.
\]

Similarly, consider the closed subvariety of $M_g$ of dimension
$2g-3+d$
\[
\overline{W}(d)=\pi(\overline{W}_*(d))=
\{[C]\in M_g\,\,|\,\,\exists\, p\in C\,\,\text{with}\,\,h^0(C,dp)\geq 2\}
\]
where $\pi: M_{g,1}\rightarrow M_g$ is the forgetful morphism.
Then
\[
\overline{W}(2)\subset\overline{W}(3)\subset
\dots\subset\overline{W}(g)=M_{g}
\]
is a stratification of $M_{g}$, whose locally closed strata
are given by
\[
W(d)=\overline{W}(d)\setminus\overline{W}(d-1)=
\left\{
[C]\in M_g\,\,
\Big|
\,\,
\begin{array}{l}
\exists p\in C\,\,\text{with}\,\, h^0(C,dp)=2,
\,\,\text{and}\\
h^0(C,(d-1)q)=1\,\,\text{for all $q\in C$}
\end{array}
\right\}
.
\]
\bigskip

Consider first the strata $W_*(d)$ of $M_{g,1}$.

\begin{prop}\label{prop.1} 
Let $5\leq d\leq g+1$.
If $d$ is not prime, or if $d$ is a prime and $(d-1)(d-2)\geq 2g$, then
$W_*(d)$ is not affine.
\end{prop}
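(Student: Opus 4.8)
The plan is to realize $W_*(d)$ as an open subvariety of a smooth variety and to show that its complement is not purely of codimension one, whence $W_*(d)$ cannot be affine. The tool is the standard fact that if $U$ is an affine open subset of a normal variety $Y$, then every irreducible component of $Y\setminus U$ has codimension one; it therefore suffices to exhibit a single component of the complement of codimension at least two. For the ambient variety I would take $\G^1_{d,*}$, which is smooth of dimension $2g+d-3$. Over the locus where $h^0(C,dp)=2$ the subspace $Z$ is forced to be all of $H^0(C,dp)$, so the forgetful map $\tau\colon\G^1_{d,*}\to\ov W_*(d)$ is an isomorphism there; since $\dim Z=2$ already forces $h^0(dp)\ge2$ on all of $\G^1_{d,*}$, the condition $h^0(dp)=2$ is open, and $W_*(d)$ is identified with the open subvariety $\{h^0(dp)=2,\ h^0((d-1)p)=1\}$ of $\G^1_{d,*}$.

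The complement of $W_*(d)$ in $\G^1_{d,*}$ is then the union of the divisor $\ov W_*(d-1)=\{h^0((d-1)p)\ge2\}$ and the locus $A=\{h^0(C,dp)\ge3\}$, the latter parametrizing exactly those pointed curves carrying a $g^2_d\subset|dp|$, i.e.\ a family of plane curves. The crux is the dimension of $A$. By Lemma \ref{dimension}, a component $\X$ of $\G^2_{d,*}$ whose general member is a basepoint-free, non-composed $g^2_d$ has dimension $2d+g-6$; since the pencil of admissible $Z$'s over a plane curve with $h^0(dp)=3$ is a $\PP^1$, the locus $A\subset\G^1_{d,*}$ has dimension $2d+g-5$, hence codimension $g-d+2$. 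For $d\le g$ this is already $\ge2$, so $A$ is the desired component. At the boundary value $d=g+1$ this codimension drops to one, and I would instead pass to the closure of $W_*(d)$ in $\G^1_{d,*}$, which is birational to $\ov W_*(d)$ and meets each such $\PP^1$ only in the finitely many limiting positions of $Z$; there the plane-curve locus has codimension $g-d+3=2$.

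To know that $A$ is nonempty I would invoke the existence statement. When $(d-1)(d-2)\ge2g$, Lemma \ref{existence} produces a genus $g$ nodal plane model of degree $d$ with the prescribed total tangency, giving a basepoint-free non-composed $g^2_d\subset|dp|$. When $d$ is composite, say $d=d_1d_2$ with $d_1,d_2\ge2$, I would instead build $A$ from \emph{composed} series: take a degree-$d_2$ cover $C\to C'$ together with a nodal plane model of $C'$ of degree $d_1$, adjusting the ramification by Riemann--Hurwitz so that $C$ has genus $g$; such composed $g^2_d$'s exist even when $(d-1)(d-2)<2g$, and a dimension count shows this locus still has codimension at least two. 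This is precisely why the hypothesis splits into two cases: for prime $d$ a basepoint-free $g^2_d$ with two-dimensional image is automatically birational onto its image (no nontrivial factorization of the degree is available), so only the numerical criterion of Lemma \ref{existence} can be used.

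The step I expect to be the main obstacle is showing that this plane-curve locus is a \emph{genuine} irreducible component of the complement of codimension $\ge2$, rather than being swallowed by the divisor $\ov W_*(d-1)$. Concretely, one must verify that a general $(C,p)$ carrying such a $g^2_d\subset|dp|$ satisfies $h^0((d-1)p)=1$, so that $A\not\subseteq\ov W_*(d-1)$; this is where the explicit nodal (or composed) models enter, by checking on a sufficiently general member that $|(d-1)p|$ remains nonspecial. The case $d=g+1$ is the most delicate, since there $W_*(d)$ is dense in $M_{g,1}$, the codimension is exactly two, and one must ensure that the general point of $A$ lies in the smooth locus of the closure model used above. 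Once $A$ is secured as a codimension-$\ge2$ component of the complement, the pure-codimension-one principle immediately yields that $W_*(d)$ is not affine.
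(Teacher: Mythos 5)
Your overall strategy coincides with the paper's: identify $W_*(d)$ with an open subset of the smooth variety $\G^1_{d,*}$ and exhibit an irreducible component of the complement $\G^1_{d-1,*}\cup\Z$ (where $\Z=\{h^0(dp)\geq 3\}$) of codimension at least two. But the two steps you treat as routine are the actual substance of the proof, and as written your argument does not close. First, you invoke Lemma \ref{dimension} to bound the whole locus $A=\Z$, but that lemma only controls components of $\G^2_{d,*}$ whose general member is basepoint-free and \emph{not composed with an involution}. Components of $\Z$ on which the complete series $|dp|$ is composed with an involution do occur when $d$ is composite -- indeed the paper's witness that $\Z\nsubseteq\G^1_{d-1,*}$ for $d=hk$ is a point of exactly such a component, namely a $k$-sheeted cover of $\PP^1$ totally ramified at $p$ -- and bounding their dimension is a separate Riemann--Hurwitz/Hurwitz-space count (the paper's Case~(i)), which you compress into ``a dimension count shows.'' That count is not free: it is precisely where the hypothesis $d\geq 5$ enters (for $d=4$, $k=\nu=2$, $\gamma=0$ the bound fails). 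Since for composite $d$ you propose the composed locus itself as your codimension-$\geq 2$ component, omitting this computation leaves the composite case unproved.

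Second, your count in the non-composed case gives $\dim A\leq 2d+g-5$, hence codimension $g-d+2$, which is $\geq 2$ only for $d\leq g$, while the proposition includes $d=g+1$. Your proposed repair -- passing to ``the closure of $W_*(d)$ in $\G^1_{d,*}$'' -- accomplishes nothing: $\G^1_{d-1,*}\cup\Z$ is a proper closed subset of every component of $\G^1_{d,*}$, so $W_*(d)$ is dense there, its closure is all of $\G^1_{d,*}$, and the complement is unchanged. The paper instead fibers the other way: over a component $\Z'$ it forms the $\PP^{l-3}$-bundle $\P$ of $3$-planes $P\supset Z$ (with $l=h^0(dp)$) and maps $\P$ to $\G^2_{d,*}$, obtaining $\dim\Z'\leq 2d+g-3-l\leq 2g+d-5$ uniformly for all $l\geq 3$ and $d\leq g+1$. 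Note that for $l=3$ your bound ($2d+g-5$) and the paper's ($2d+g-6$) differ by exactly one, and that one dimension is exactly what is at stake at $d=g+1$; to salvage your version you must either justify the sharper count (i.e., that forgetting $Z$ is generically finite on the relevant locus) or treat $d=g+1$ by a genuinely different argument. A smaller but real point: you bound only the component of $\Z$ containing the curve produced by Lemma \ref{existence}, which requires knowing that the \emph{general} point of that component is still basepoint-free, non-composed and has $l=3$; the paper avoids this by bounding every component of $\Z$ not contained in $\G^1_{d-1,*}$ and separately producing one such component.
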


\begin{proof}
There is a natural forgetful morphism
\[
\xymatrix@R=0in{
\G^1_{d,*}\ar[rr] && \overline{W}_*(d)\\
[(C,p,Z)] \ar@{|->}[rr] && [(C,p)]
}
\]
that restricts to an isomorphism
\[
\G^1_{d,*}\setminus(\G^1_{d-1,*}\cup\Z)\cong W_*(d)
\]
where
\[
\Z=\{[(C,p,Z)]\in\G^1_{d,*}\,\,|\,\, 1\in Z\subset H^0(C,dp), \,\,\dim(Z)=2,\,\, h^0(dp)>2\}\,.
\]

The variety $\G^1_{d,*}$ is smooth   and of dimension  equal to $2g+d-3$.
Since $\G^1_{d-1,*}\subset \G^1_{d,*}$ is a divisor,
in order to show that $W_*(d)$ is not affine we will prove that 
\begin{itemize}
\item[(I)]
$\Z\nsubseteq \G^1_{d-1,*}$
\item[(II)]
every irreducible component $\Z'$ of $\Z$ not entirely contained
in $\G^1_{d-1,*}$ satisfies the condition:
$\codim_{\G^1_{d,*}} \Z'>1.$
\end{itemize}

\vskip 0.2 cm
First suppose that $d$ is not prime and set $d=hk$, with $h>1$ and $k>1$.
 Let $C$ be a genus $g$,
 $k$-sheeted cover of $\PP^1$ with a point $p$ of total ramification.
 Since $d=hk$, with $h>1$, we may choose an element
$f\in H^0(C,dp)\smallsetminus H^0(C,(d-1)p)$ and set
$Z=\langle 1,f\rangle $. Then $[(C,p, Z)]\in \Z\smallsetminus \Z\cap \G^1_{d-1,*}$. This proves
(I). We next turn our attention  to point (II).
\vskip 0.2 cm
Let  
$\Z'$ be an irreducible component of $\Z$ not entirely contained in $\G^1_{d-1,*}$. We can assume that a general point of $\Z'$ corresponds to a triple $(C,p, Z)$, as above, with 
$h^0(dp)=l>2$ and 
$h^0((d-1)p)=l-1$ (no fixed points for $|dp|$).
\vskip 0.2 cm

Two cases may occur:
\begin{itemize}
\item[(i)]
At a general point of $\Z'$,
the linear series $|dp|$ is composed with an involution.
\item[(ii)]
At a general point of $\Z'$,
the linear series $|dp|$ is not composed with an involution.
\end{itemize}

\vskip 0.2 cm

{\bf{Case (i).}}

The curve $C$ is a $\nu$-sheeted cover of curve $\Gamma$,
with $d=k\nu$, and with a point of total ramification. Let $F:C\to \Gamma$ be this cover and let
 $\gamma$ be the genus of $\Gamma$.
The number $\delta$ of branch points of $F$ (including $F(p)$)
is at most
$$
\delta\leq 2g-2\nu(\gamma-1)-\nu\, .
$$
Therefore,
 $$
\aligned
\dim\Z'&\leq \delta+(2\gamma+k-3)\\
& \leq 2g+(2-2\nu)(\gamma-1)-\nu-k-1,\qquad&&\text{if}\quad \gamma\geq1,\\
\dim\Z'&\leq [(\delta-1)-2]+[(k+1)-2]\leq
2g+\nu+k-4,\qquad&&\text{if}\quad \gamma=0.\\\\
\endaligned
$$
In all cases, using
the fact that
$d=k\nu$ and $k,\nu>1$, one sees that
$$
\dim\Z'\leq d+2g-5=\dim \G^1_{d,*}-2\,.
$$
is always satisfied except for $d<5$, $k=\nu=2$ and $\gamma=0$.
Hence, the inequality always holds if $d\geq 5$.

 \vskip 0.5 cm 

{\bf{Case (ii).}}

Let $[(C,p,Z)]$ be a general point of $\Z'$.
By assumption 
$h^0(dp)=l>2$ and the linear series $|dp|$ is fixed point free and not composed with an involution. There is a natural $\PP^{l-3}$-bundle $\P$ over $\Z'$ whose fiber over
$[(C,p,Z)]$ is $\PP(H^0(C,dp)/Z)$. A point of this bundle over $[(C,p,Z)]$
can be viewed as a triple $[(C,p,P)]$, where $P$ is a 3-dimensional subspace of  $H^0(C,dp)$ containing
$Z$. Thus, there is an injective map
$$
\P\to\G^2_{d,*}\,.
$$
Therefore, by Lemma \ref{dimension},
$$
\dim \Z'=\dim\P-(l-3)\leq 2d+g-3-l\, .
$$
As $l\geq 3$ and $d\leq g+1$, we obtain
$2d+g-3-l\leq 2g+d-5$, and so
the codimension  of $\Z'$ in 
$\G^1_{d,*}$ is strictly greater than $1$, proving the first part of the proposition.
\vskip 0.3 cm
Suppose next that $(d-1)(d-2)/2-g\geq0$. Then, by Lemma \ref{existence},
there exists an irreducible component $\Z'$ of $\Z$ which falls in case (ii)
and again we are done.

\end{proof}

\bigskip

Now we turn our attention to the locally closed
strata $W(d)$ of $M_g$.

\begin{prop} \label{prop.2}Let $g\geq 6$
and $5\leq d\leq g-1$.
Then 
$W(d)$ is not affine.
\end{prop}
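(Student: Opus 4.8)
The plan is to mimic the structure of the proof of Proposition \ref{prop.1}, but now working over $M_g$ rather than $M_{g,1}$. The key point is that $W(d)=\pi(W_*(d))$, where $\pi:M_{g,1}\to M_g$ forgets the marked point, and the fibre of $\pi$ over a general $[C]\in W(d)$ consists of the finitely many Weierstrass-type points $p$ realizing $h^0(C,dp)=2$. Thus $\pi$ restricts to a generically finite (indeed finite-to-one) map $W_*(d)\to W(d)$, and $\dim W(d)=\dim W_*(d)=2g+d-3$, so $\ov W(d)\subset M_g$ is a closed subvariety of codimension one in the next stratum $\ov W(d+1)$. As in the previous proof, one realizes $W(d)$ as the complement inside a smooth variety of a locus that fails to be purely divisorial, and hence concludes non-affineness: a smooth variety minus a closed subset whose complement is affine must have that closed subset purely of codimension one (this is the standard criterion, since an effective Cartier-divisor complement is required for affineness of the open part when the ambient is smooth and the divisorial part is removed).

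First I would set up the resolution. Let $\X$ be an irreducible component of $\G^1_{d}$ (the Brill--Noether variety of $g^1_d$'s over $M_g$) whose general member gives $W(d)$, and consider the forgetful/normalization map realizing $W(d)$ as an open subset of a smooth variety $Y$ of dimension $2g+d-3$, with boundary $\ov W(d-1)$ appearing as a divisor and a further locus $\Z\subset Y$ (the analogue of the $\Z$ in Proposition \ref{prop.1}, parametrizing the special configurations where $h^0(dp)>2$ or where extra behavior occurs). The goal is again twofold: show (I) that $\Z$ is not contained in the divisorial boundary, and (II) that every component of $\Z$ not contained in that boundary has codimension strictly greater than one in $Y$. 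The dimension estimates split, exactly as before, into the case where $|dp|$ is composed with an involution and the case where it is not; in the latter case one invokes Lemma \ref{dimension} to bound $\dim\Z'\leq 2d+g-3-l$ with $l\geq 3$, and compares against $\dim Y=2g+d-3$, using $d\leq g-1$ to force codimension $\geq 2$.

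The essential new ingredient, and the reason the hypotheses shift to $g\geq 6$ and $d\leq g-1$, is that over $M_g$ one must account for the freedom in choosing the point $p$ on $C$ and for the fact that $W(d)$ being a subvariety of $M_g$ forces a more delicate control of the locus where the Weierstrass point is not unique or degenerates. This is precisely where the computation of Diaz \cite{diaz:two} enters, as the paper's own roadmap states: Diaz's calculation supplies the dimension count for the relevant family of plane curves (or equivalently the relevant sublocus of $\G^1_d$ or $\G^2_d$) needed to verify condition (II) in this setting. I would therefore carry out the argument in the order: (1) establish $W(d)=\pi(W_*(d))$ with $\pi$ finite-to-one on $W_*(d)$ and identify the smooth model $Y$; (2) verify the codimension-one nature of the $\ov W(d-1)$ boundary; (3) prove (I) by exhibiting an explicit cyclic cover example descended from the $M_{g,1}$ construction; (4) prove (II) via the two-case dimension estimate, importing Diaz's bound for the involution/non-involution dichotomy; and (5) conclude by the affineness criterion.

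The hard part will be step (4), specifically the case analysis for $\Z'$ over $M_g$: unlike the pointed situation, one cannot simply fix the marked point, so the locus $\Z$ must be stratified according to how the special point $p$ varies and whether $|dp|$ is composed with an involution, and the dimension bounds must be sharp enough to guarantee codimension $\geq 2$ across the full range $5\leq d\leq g-1$ with $g\geq 6$. Reconciling Diaz's computation with the Brill--Noether dimension counts, and handling the boundary cases at the extremes of the allowed range, is where the real work lies; the affineness conclusion itself, once the non-purely-divisorial structure of the complement is in hand, is formal.
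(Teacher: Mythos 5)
There is a genuine gap here. Your high-level frame (exhibit the stratum, up to a finite surjective map, as the complement in a smooth variety of a closed locus that is not purely divisorial) is the right one, but you never identify the locus that actually has to be controlled, and you misplace where Diaz's computation enters. The paper's proof does \emph{not} rerun the analysis of Proposition \ref{prop.1} over $M_g$: it observes that $\pi$ restricts to a finite surjective map $W_*(d)\smallsetminus\Y\to W(d)$, where
$\Y=\{[(C,p)]\in W_*(d)\ |\ \exists\,q\neq p \text{ with } h^0(C,(d-1)q)\geq 2\}$,
so that affineness of $W(d)$ would force affineness of $W_*(d)\smallsetminus\Y$ and hence (by smoothness of $W_*(d)$, already known from the $\G^1_{d,*}$ discussion) purity in codimension one of $\Y$. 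The entire content is then to produce a non-empty component of $\Y$ of codimension $\geq 2$ in $W_*(d)$, and this is exactly what Diaz's Theorem 3.2 provides: it concerns the loci $W(k,l)$ of curves carrying two Weierstrass points of types $k$ and $l$, applied here with $k=d-1$, $l=d$, via $\Y'=\pi^{-1}(W(d-1,d))\cap W_*(d)$. The case split is not the involution/non-involution dichotomy you propose but whether $d-1\geq\tfrac{1}{2}(g+2)$ (where $\dim W(d-1,d)=g-3+(d-1)+d$) or not (where one needs the sharper reading of Diaz giving $\dim\leq 2g-1+l-k=2g$); in both ranges one gets $\dim\Y'\leq \dim W_*(d)-2$.

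Concretely, the missing idea in your plan is the locus $\Y$ itself: the obstruction to $W(d)$ being (finitely covered by) an open subset of $W_*(d)$ is the presence of a \emph{second} point $q\neq p$ with $h^0((d-1)q)\geq 2$, not the behavior of $|dp|$ at the marked point. Your step (4), which invokes Lemma \ref{dimension} and the composed-with-an-involution dichotomy, reproduces the argument already used for Proposition \ref{prop.1} and does not address this new locus, while your description of Diaz's input as ``the dimension count for the relevant family of plane curves'' does not match what is actually needed (a dimension bound for curves with two Weierstrass points of prescribed types). Your step (3) is likewise unnecessary: non-emptiness of $\Y'$ is part of Diaz's statement. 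As written, the plan cannot be completed without supplying the definition of $\Y$ and the two dimension bounds drawn from Diaz's theorem.
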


\begin{proof}
The forgerful map $\pi$ restricts to
$$
\pi|_{\overline{W}_*(d)}: \overline{W}_*(d)\to \overline{W}(d)\,
$$ 
and, in particular, to
$$
\pi|_{W_*(d)\smallsetminus  \Y}: W_*(d)\smallsetminus  \Y \to W(d) 
$$
which is finite and surjective, where
$$
\Y=\{[(C,p)]\in W_*(d)\,\,|\,\,\exists\,q\neq p,
\,\,\text{with}\,\,h^0(C,(d-1)q)\geq 2\}\, .
$$
Hence, $W(d)$ is affine only if
$W_*(d)\smallsetminus \Y$ is.
Since $W_*(d)$ is smooth, to show that $W(d)$
is affine it suffices to find a non-empty component $\Y'$ of $\Y$ of
codimension greater than $1$ inside $W_*(d)$.
This is a direct consequence of Theorem  3.2 in Diaz's
paper \cite{diaz:two}.
There the following is proven. 
\vskip 0.2 cm

\begin{thm}[\cite{diaz:two}]
Let $g\geq 4$ and let $k\leq l\leq g-1$.
If $k, l\geq\tfrac{1}{2}(g+2)$, then there exists a non-empty component $W(k,l)$
of the locus of points
in $M_g$ corresponding to curves possessing both a Weierstrass point 
of type $k$ and a Weierstrass point of type $l$ which has dimension 
$g-3+k+l$.
\end{thm}

\vskip 0.2 cm

A closer inspection of Diaz' proof also shows that:

\vskip 0.2cm

{\it For $g\geq 4$ and $k\leq l\leq g-1$,
if $k<\tfrac{1}{2}(g+2)$, then there exists a non-empty component
$W(k,l)$ of the locus of points in $M_g$ corresponding to curves
possessing both a Weierstrass point of type $k$ and a Weierstrass point
of type $l$ which has dimension at most $2g-1+l-k$.}

\vskip 0.2cm

By a Weierstrass point of type $h$ on a curve $C$ it is a meant a point $p\in C$ for which
$h^0(C, h p)\geq 2$. In proving this assertion, Diaz also shows that, if $k<l$, then
for a general $[C]\in W(k,l)$
there exists a point $p\in C$ with $h^0(C, lp)=2$. We can then take 
$$
\Y'=\pi^{-1}(W(d-1,d))\cap W_*(d)\,\, .
$$
Hence, for $4+g/2\leq d\leq g-1$,
\[
\dim\Y'
=g-3+(d-1)+d=g+2d-4\leq d+2g-5=\dim\, W_*(d)-2
\]
On the other hand, for $5\leq d< 4+g/2$,
\[
\dim\Y'
\leq 2g-1+d-(d-1)=2g\leq d+2g-5=\dim\, W_*(d)-2
\, .
\]
\end{proof}
\end{section}

\bibliographystyle{amsplain}
\bibliography{Weierstrass_flag_revised}

\end{document}